\DeclareMathAlphabet{\mathpzc}{OT1}{pzc}{m}{it}
\numberwithin{equation}{section}
\newtheorem{definition}{Definition}[section]
\newtheorem{theorem}[definition]{Theorem}
\newtheorem{proposition}[definition]{Proposition}
\newtheorem{corollary}[definition]{Corollary}
\newtheorem{remark}[definition]{Remark}
\theoremstyle{remark}
\newtheorem{example}[definition]{Example}
       \def\vgf{\varphi}    
            \def\gl{\lambda}
\def\gm{\mu}                 
\def\gs{\sigma}
     \def\Gd{\Delta}
\def\Gw{\Omega}              
\newcommand{\diver}{\mathrm{div}\,}
\newcommand{\loc}{{\mathrm{loc}}}
\newcommand{\dx}{\,\mathrm{d}x}
\newcommand{\dm}{\,\mathrm{d}m}
\newcommand{\dmu}{\,\mathrm{d}\mu}
\newcommand{\core}{C_0^{\infty}(\Omega)}
\newcommand{\R}{{\mathbb R}}
\renewenvironment{proof}{{\bfseries Proof.}}{\hfill$\Box$}
\newcommand{\CM}{{\mathbb C}}
\newcommand{\NM}{{\mathbb N}}
\newcommand{\RM}{{\mathbb R}}
\newcommand{\Cc}{{\mathcal C}}
\newcommand{\Dd}{{\mathcal D}}
\newcommand{\Ee}{{\mathcal E}}
\newcommand{\Hh}{{\mathcal H}}
\newcommand{\ab}{{\mathbf a}}
\begin{document}
\title{Shnol-type theorem for the Agmon ground state}
\author{Siegfried Beckus, Yehuda Pinchover}

\address{Department of Mathematics\\
Technion - Israel Institute of Technology\\
Haifa, Israel}
\email{beckus.siegf@technion.ac.il}

\address{Department of Mathematics\\
Technion - Israel Institute of Technology\\
Haifa, Israel}
\email{pincho@technion.ac.il}

\begin{abstract}
Let $H$ be a Schr\"odinger operator defined on a noncompact Riemannian manifold $\Omega$, and let $W\in L^\infty(\Omega;\mathbb{R})$. Suppose that the operator $H+W$ is critical in $\Omega$, and let $\varphi$ be the corresponding Agmon ground state. We prove that if $u$ is a generalized eigenfunction of $H$ satisfying $|u|\leq \varphi$ in $\Omega$, then the corresponding eigenvalue is
in the spectrum of $H$. The conclusion also holds true if for some $K\Subset \Omega$ the operator $H$ admits a positive solution in $\tilde{\Omega}=\Omega\setminus K$, and $|u|\leq \psi$ in $\tilde{\Omega}$, where $\psi$ is a positive solution of minimal growth in a neighborhood of infinity in $\Omega$.

Under natural assumptions,  this result holds true also in the context of infinite graphs, and Dirichlet forms.\\[2mm]

\noindent  2000  \! {\em Mathematics  Subject  Classification.}
Primary  \! 35P05; Secondary  35B09, 35J10, 35R02, 39A12, 81Q10, 81Q35.\\[2mm]
\noindent {\em Keywords.} Caccioppoli inequality, Schr\"odinger operators, generalized eigenfunction, Green function, ground state, positive solutions, Shnol theorem, weighted graphs.

\end{abstract}


\maketitle


\section{Introduction}
\label{Chap-Int}

In 1957, Shnol \cite{Shn57} proved that if a generalized eigenfunction of a Schr\"odinger operator $H$ on $\R^d$ with potential bounded from below has at most a polynomial growth, then the corresponding energy is in the spectrum of $H$. This celebrated result was independently rediscovered by Simon \cite{Sim81} for a more general class of potentials. Additionally, \cite{Sim81} proved that a dense subset of the spectrum of $H$ admits a polynomial bounded solution. More precisely, it is shown that there is a polynomially bounded solution of the equation $Hu=\lambda u$ in $\R^d$ for $H$-spectrally a.e. \!energy $\lambda$, see also \cite{CyconFroeseKirschSimon87,Shu92}. A remarkable generalization of Shnol's theorem in the Dirichlet form setting was proven in \cite{BoLeSt09}  (see also \cite{HaesKe11,FrLeWi14} and references therein for related results). Also, a converse of Shnol's theorem was proven in the Dirichlet setting \cite{BoSt03}. The proofs rely on a local estimate of the gradient of the generalized eigenfunction, which is needed to control the mixed terms. For this purpose, a Caccioppoli-type inequality is a crucial tool (see \cite{HeinonenKilpelainenMartio93,BiMo95}, and references therein for the unperturbed operator, and \cite{BoLeSt09} for the Dirichlet form setting).

The present work deals with a Shnol-type theorem on a noncompact manifold by replacing the polynomial bound with an object intrinsically defined through the Schr\"odinger operator. More precisely, let $H$ be a {\em critical} Schr\"odinger operator defined on a noncompact manifold, and denote by $\vgf$ its (Agmon) ground state. It is natural to believe that a Shnol-type theorem is still valid if the generalized eigenfunction is pointwise bounded by $\vgf$. This statement was conjectured by Devyver, Fraas and Pinchover in \cite[Conjecture~9.9]{DeFrPi14}. In this work, we provide a positive answer to this conjecture. Let us first provide the class of operators we are mainly interested in, before stating the main theorem. The reader is referred to \cite{Agm83,Pi07,Pinsky95} and references therein for a more detailed discussion on criticality theory.

Let $\Omega$ be a domain in $\R^d$ (or a noncompact $d$-dimensional connected Riemannian manifold). Let $m>0$ be a positive measurable function in $\Gw$, and denote $\dm: =m(x)\dx$, where $\dx$ is the volume form of $\Omega$ (which is just the Lebesgue measure in the case of Schr\"odinger operators on domains of $\R^n$). We assume that in any coordinate system $(U;x_{1},\ldots,x_{d})$ the operator $H$ has the form
\begin{equation}\label{H}
H \;
	:= \; -\diver\! (A\nabla) + V,
\end{equation}
where the minus divergence is the formal adjoint of the gradient with respect to the measure $m$, and $A$ is a measurable symmetric matrix valued function $A:U\to\RM^{d^2}$ satisfying that for every $K\Subset U$ there is a constant $\lambda_K\geq 1$ such that $\lambda_K^{-1} I \leq A(x) \leq \lambda_K I$, where $I$ denotes the $d$-dimensional identity matrix,  $A\leq B$ means that $B-A$ is a nonnegative definite matrix, and we use the notation $K\Subset U$ if $K$ is relatively compact in $U$. For $p> d/2$, let $V\in L^p_{\loc}(\Omega,\RM)$. Define the symmetric form
$$
\ab(u,v)=\ab_H(u,v) \;
	:= \; \int_\Omega
			\big(\langle A\nabla u, \nabla v\rangle + V u\overline{v}\big)
		\dm(x)
$$
on $ C_0^\infty(\Omega)$ associated with the Schr\"odinger operator $H$, where $C_0^\infty(\Omega)$ denotes the set of compactly supported smooth functions on $\Gw$.

Consider the induced quadratic form which we also denote by $\ab=\ab_H$. We say that $H$ is nonnegative in $\Gw$ (and write $H\geq 0$ in $\Gw$) if $\ab\geq 0$ on $\core$, and $H$ is {\em semibounded} if there is a constant $c\in\RM$ such that $\ab(v)\geq c \|v\|^2$ for all $v\in\core$, where $\|\cdot\|$ is the norm of $L^2(\Gw,\dm)$.

The operator $H$ is called {\em supercritical in $\Omega$} if $\ab$ is not nonnegative on $\core$. Suppose that $H\geq 0$ in $\Gw$. Then $H$ is said to be {\em critical in $\Gw$} if for any nonzero nonnegative $W\in L^p_{\loc}(\Omega,\RM)$, with $p>d/2$,  the operator $H-W$ is supercritical in $\Gw$, otherwise, $H$ is {\em subcritical} in $\Gw$.

We note that if $H$ is critical in $\Gw$, then (up to a multiplicative constant) the equation $Hu=0$ in $\Omega$ admits a unique positive (super)solution. Such a solution is called {\em Agmon ground state} (or in short, a ground state), see Theorem~\ref{t:char_criticality}.

It is well-known that if $\ab$ is nonnegative on $\core$, then the Dirichlet problem admits a unique solution in every bounded subdomain $\Gw'\Subset \Gw$ \cite{Agm83}. Hence, for $u\in\core$ we have $\ab(u)=0$ if and only if $u=0$. In particular, $\ab$ defines a scalar product on $ C_0^\infty(\Omega)$. Let $\mathcal{H}_\ab(\Omega)$ be the closure of $ C_0^\infty(\Omega)$ with respect to the norm induced by $\ab$. In the sequel it will be evident that the operator $H$ is critical in $\Gw$ if and only if $\mathcal{H}_\ab(\Omega)$ is not a functional space.

A nonzero function $u\in W^{1,2}_\loc(\Omega)$ is called a {\em generalized eigenfunction of $H$ with eigenvalue $\lambda\in\RM$} if $\ab(u,v)=\lambda  \int_\Omega u  \overline{v}\dm(x)$ for every $v\in C_0^\infty(\Omega)$. The main result of this work is the following.

\begin{theorem}
\label{Theo-Shnol}
Let $H$ be an operator of the form \eqref{H}, and assume that $H+W$ is critical in $\Gw$ for some real-valued $W\in L^\infty(\Gw)$. Denote by $\varphi$ the corresponding ground state of $H+W$. If $u$ is a generalized eigenfunction of $H$ with eigenvalue  $\lambda\in\RM$ satisfying $|u|\leq \varphi$ in $\Gw$, then $\lambda$ belongs to the $L^2(\Gw,\dm)$-spectrum of $H$.
\end{theorem}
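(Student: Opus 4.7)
The plan is to verify $\lambda\in\sigma(H)$ by exhibiting a Weyl sequence for the self-adjoint Friedrichs extension of $H$ (which exists since $H\geq -\|W\|_\infty$). The natural candidate is $v_n:=\chi_n u$, where $\chi_n\in\core$ satisfy $0\leq\chi_n\leq 1$ and $\chi_n\to 1$ locally; since $u\not\equiv 0$, the norm $\|\chi_n u\|_{L^2}$ is bounded below (and diverges unless $u\in L^2$). The decisive choice of $\chi_n$ comes from criticality of $H+W$. A standard manipulation using $(H+W)\varphi=0$ yields the ground-state substitution
\begin{equation*}
\ab_{H+W}(\chi\varphi)=\int_\Omega \varphi^2\langle A\nabla\chi,\nabla\chi\rangle\dm\qquad (\chi\in\core),
\end{equation*}
and criticality guarantees the existence of a null sequence, i.e.\ cutoffs $\chi_n$ for which $\int_\Omega\varphi^2\langle A\nabla\chi_n,\nabla\chi_n\rangle\dm\to 0$.

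Because $u$ is a generalized eigenfunction, $\ab_H(u,\chi_n w)=\lambda\langle u,\chi_n w\rangle$ for every $w\in\core$; subtracting this from the expansion of $\ab_H(\chi_n u,w)$ gives the commutator identity
\begin{equation*}
\ab_H(\chi_n u,w)-\lambda\langle\chi_n u,w\rangle=\int_\Omega\bigl(u\langle A\nabla\chi_n,\nabla\bar w\rangle-\bar w\langle A\nabla u,\nabla\chi_n\rangle\bigr)\dm.
\end{equation*}
Setting $\tilde u:=u/\varphi$ and $\tilde w:=w/\varphi$ (meaningful since $\varphi>0$), the integrand rearranges as $\varphi^2\langle A\nabla\chi_n,\tilde u\nabla\overline{\tilde w}-\overline{\tilde w}\nabla\tilde u\rangle$, the gradient terms in $\varphi$ having cancelled. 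Cauchy--Schwarz applied first in the pointwise $A$-inner product and then in $L^2(\Omega,\dm)$, together with $|\tilde u|\leq 1$, bounds the absolute value of the integral by
\begin{equation*}
\sqrt{2}\,\Bigl(\int_\Omega\varphi^2\langle A\nabla\chi_n,\nabla\chi_n\rangle\dm\Bigr)^{1/2}\cdot\Bigl(\ab_{H+W}(w)+\int_\Omega|w|^2\langle A\nabla\tilde u,\nabla\tilde u\rangle\dm\Bigr)^{1/2},
\end{equation*}
whose first factor tends to $0$ by the choice of $\chi_n$, where the reverse ground-state substitution $\int\varphi^2\langle A\nabla\tilde w,\nabla\tilde w\rangle\dm=\ab_{H+W}(w)$ has already been used.

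It remains to dominate the second factor by a multiple of the form norm of $w$, uniformly in $n$. The piece $\ab_{H+W}(w)\leq\ab_H(w)+\|W\|_\infty\|w\|_{L^2}^2$ is harmless since $W\in L^\infty(\Omega)$. The main obstacle is the last summand, which calls for a Caccioppoli-type inequality for $\tilde u$. I would exploit that $\tilde u$ solves the ground-state-transformed equation $-\diver(\varphi^2 A\nabla\tilde u)=(\lambda+W)\varphi^2\tilde u$ (obtained from $Hu=\lambda u$ and $(H+W)\varphi=0$), testing it against $|\tilde w|^2\tilde u$: expanding $\nabla(|\tilde w|^2\tilde u)$ and using Young's inequality to absorb one half of $\int|\tilde w|^2\varphi^2\langle A\nabla\tilde u,\nabla\tilde u\rangle\dm$ on the left-hand side, together with $|\tilde u|\leq 1$ and $W,\lambda$ bounded, produces
\begin{equation*}
\int_\Omega|w|^2\langle A\nabla\tilde u,\nabla\tilde u\rangle\dm\leq C\bigl(\ab_H(w)+\|w\|_{L^2}^2\bigr),
\end{equation*}
with $C$ independent of $w$ and $n$. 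Normalizing $v_n$ and invoking the form version of Weyl's criterion then yields $\lambda\in\sigma(H)$. The subtle points to watch are the $W^{1,2}_{\mathrm{loc}}$ regularity needed to make sense of $\nabla\tilde u$ and the existence of the cutoff-type null sequence $\chi_n\varphi$ from criticality, both of which are standard but must be invoked carefully.
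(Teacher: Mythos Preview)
Your proposal is correct and follows essentially the same route as the paper. The paper's proof first applies the ground state transform $T_\varphi$ explicitly as a unitary equivalence, reducing to the case where the critical operator has ground state $1$, and then proves the Caccioppoli estimate (Proposition~4.1) and the Shnol-type statement (Proposition~5.1) in that normalized setting; your substitutions $\tilde u=u/\varphi$, $\tilde w=w/\varphi$ and the cancellation of the $\nabla\varphi$ terms are precisely this transform carried out inside the integrals, and your Caccioppoli argument (testing the transformed equation against $|\tilde w|^2\tilde u$ and absorbing via Young/Cauchy--Schwarz) is the same computation as the paper's. One small point: your assertion $\chi_n\in C_0^\infty(\Omega)$ is most cleanly justified by taking the null sequence of the \emph{transformed} critical operator, exactly as the paper does; if you instead start from a null sequence $\psi_n\in C_0^\infty(\Omega)$ for $H+W$ and set $\chi_n=\psi_n/\varphi$, you only get compactly supported $W^{1,2}$ functions, which still suffice but require the density remark you flag at the end.
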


\begin{remark}\label{rem1}{\em
In \cite{Pi07a} we prove a sufficient condition, in terms of the behavior of a
ground state of a critical Schr\"odinger operator $H_1$, such that a positive (sub)solution of a
Schr\"odinger operator $H$ is a ground state. In a certain sense, Theorem~\ref{Theo-Shnol} is a generalization of the main result of \cite{Pi07a}.
}
\end{remark}

If $H$ is subcritical, or more generally, the equation $Hw=0$ admits a positive solution in $\Gw\setminus K$, where $K\Subset \Gw$, then there exists a real-valued $W\in C_0(\Omega)$ such that $H+W$ is critical \cite{Pi07,Pinsky95}, where $C_0(\Omega)$ is the space of all continuous compactly supported functions on $\Omega$. Hence, Theorem~\ref{Theo-Shnol} applies in this case. In particular, we have the following result.

\begin{corollary}
\label{Cor-Shnol-sub}
Let $H$ be an operator of the form \eqref{H}, and assume that $H$ is subcritical in $\Gw$.  Let $x_0\in \Gw$ be a fixed reference point, and denote by $G(x):=G_H^\Gw(x,x_0)$ the positive minimal Green function of $H$ in $\Gw$ with a pole at $x_0$. If $u$ is a generalized eigenfunction of $H$ with eigenvalue  $\lambda\in\RM$, and satisfies $|u|\leq G$ in $\Gw\setminus B(x_0,r)$ for some $r>0$, then $\lambda$ belongs to the $L^2$-spectrum of $H$.
\end{corollary}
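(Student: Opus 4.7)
The plan is to deduce the corollary from Theorem~\ref{Theo-Shnol} by exhibiting a critical perturbation $H+W$ whose ground state $\varphi$ dominates $|u|$ globally, up to a multiplicative constant. Since $H$ is subcritical in $\Omega$, the result cited just before the corollary gives some $W\in C_0(\Omega)$ such that $H+W$ is critical in $\Omega$; let $\varphi$ denote the corresponding Agmon ground state. It then suffices to prove $|u|\leq C\varphi$ on all of $\Omega$ for some $C>0$, since once this holds, Theorem~\ref{Theo-Shnol} (applied with the ground state $C\varphi$) yields $\lambda\in\sigma(H)$.

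To compare $\varphi$ with $G$ near infinity, I would observe that $W\in C_0(\Omega)$ has compact support, so outside $\supp(W)$ the ground state $\varphi$ solves $H\varphi=0$. Moreover, since being a positive solution of minimal growth depends only on behavior near infinity, $\varphi$ is a positive $H$-solution of minimal growth in a neighborhood of infinity; by definition, $G$ is another such solution. Positive $H$-solutions of minimal growth in a neighborhood of infinity being essentially unique (a cornerstone of criticality theory), there exist a compact $K\Subset\Omega$ containing $\supp(W)\cup\overline{B(x_0,r)}$ and a constant $C_1>0$ with $G\leq C_1\varphi$ on $\Omega\setminus K$. On $K$, standard local elliptic regularity (using $V\in L^p_{\loc}$, $p>d/2$) makes $u$ continuous and $\varphi$ continuous and strictly positive, so $|u|\leq C_2\varphi$ on $K$; combining this with the preceding estimate and the hypothesis $|u|\leq G$ on $\Omega\setminus K\subset \Omega\setminus B(x_0,r)$ yields the global bound $|u|\leq \max(C_1,C_2)\,\varphi$ on $\Omega$.

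The main obstacle I expect is the comparison $G\leq C_1\varphi$ near infinity: it rests on the essential uniqueness of positive $H$-solutions of minimal growth at infinity, and it requires verifying that $\varphi$, originally constructed as a ground state of $H+W$, really is a positive $H$-solution of minimal growth at infinity in $\Omega$. Once that identification is justified, the local boundedness of $u$ and the final invocation of Theorem~\ref{Theo-Shnol} are routine.
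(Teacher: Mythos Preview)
Your argument is correct and follows essentially the same route as the paper's proof: pass to a critical perturbation $H+W$ with compactly supported $W$, compare the Green function $G$ with the ground state $\varphi$ near infinity, handle the compact region by continuity, and apply Theorem~\ref{Theo-Shnol}. The paper simply cites \cite[Theorem~4.6 and Corollary~4.3]{Pi88} for the existence of $W$ and the equivalence $G\asymp\varphi$ in $\Omega\setminus B(x_0,r)$, whereas you unpack this comparison via the minimal-growth characterization---which is exactly the mechanism behind the cited result, so the two proofs coincide.
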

\begin{remark}\label{rem2}{\em
If $H$ is supercritical and semibounded, then there exists a constant $c>0$ such that $\tilde{H}:=H+cI\geq 0$ on $\core$. Then either Theorem~\ref{Theo-Shnol} or Corollary~\ref{Cor-Shnol-sub} clearly applies to $\tilde{H}$ and hence to generalized eigenfunctions of $H$.
 }
\end{remark}


The proof of Theorem~\ref{Theo-Shnol} hinges on a Weyl-type argument combined with a ground state transform (see Section~\ref{Chap-NullSeqGrdSt}) which eliminates the potential $V$.
Let us provide some examples where our main result (Theorem~\ref{Theo-Shnol}) applies.
\begin{example}\label{ex1}
Let $\Omega\subseteq\RM^d$ be a domain or a noncompact Riemannian manifold as above. Assume that the operator $H:=-\diver\! (A\nabla)$ on $L^2(\Omega,\dm)$ is critical in $\Gw$. The corresponding form acts on $\core$ as
$$
\ab(u)
	:=  \int_\Omega \langle A \nabla u, \nabla u\rangle \dm.
$$
Obviously, the ground state of $H$ in $\Gw$ is the constant function $1$. It follows from Theorem~\ref{Theo-Shnol} that if $u_\gl$ is a {\em bounded} generalized eigenfunction of $H$ with an eigenvalue $\gl$, then $\gl\in\gs(H)$. We note that particular cases of the present example are clearly given by parabolic Riemannian manifolds or more generally by critical weighted Laplace-Beltrami operators.
\end{example}
The aim of the following example is to provide a simple proof of the well known fact that the spectrum of the Laplacian on $\R^d$ is equal to $[0,\infty)$.
\begin{example}\label{ex11}
Let $H$ be the Laplacian on $\R$. Then $H$ is critical in $\R$ with a ground state equal $1$. Hence, $\sigma(H)\subset [0,\infty)$. Moreover, for any $k\in\R$, the exponential function $\mathrm{e}^{\mathrm{i}kx}$ is a bounded generalized eigenfunction with eigenvalue $\gl=k^2$. Theorem~\ref{Theo-Shnol} implies that $\sigma(H) = [0,\infty)$.
In particular, for any $\gl\geq 0$ there exists a Weyl sequence $\{\phi_{i,\gl}\}$ for the operator $H-\gl$.

Consider now the Laplacian $-\Gd$ on $\R^d$, and denote by $x=(x_1,\ldots,x_d)$ a point in $\R^d$.   It can be easily verified that the sequence $\{\prod_{j=d}^n\phi_{i,\gl}(x_j)\}$ is a Weyl sequence for $-\Delta-d\gl$ in $\R^d$, and hence, the spectrum of the Laplacian on $\R^d$ equals to $[0,\infty)$.
\end{example}
\begin{example}
Let $H$ be a symmetric subcritical operator on $\Gw$ of the form \eqref{H}. Let $W\geq 0$ be the optimal Hardy-weight that is given in \cite[Theorem~2.2]{DeFrPi14}. Assume further that $W>0$ in $\Gw$. Then the operator $W^{-1}H-1$ is critical in $\Gw$ with a ground state $u_0$, and  $u_0\not \in L^2(\Gw,W\dx)$. Moreover, for any $\gl\geq 1$ the operator $W^{-1}H$ admits generalized eigenfunction $u_\gl$ satisfying $|u_\gl| \leq u_0$ in $\Gw$, see \cite[Lemma~7.1 and the proof of Theorem~7.2]{DeFrPi14}.

Theorem~\ref{Theo-Shnol} implies that $\gs(W^{-1}H)=[1,\infty)$. This gives a simple alternative proof for the last statement of  \cite[Theorem~2.2]{DeFrPi14}.
\end{example}

The outline of the present paper is as follows. In Section~\ref{Chap-Set}, we provide a short summary on the theory of forms and in particular, we present a Weyl sequence assertion for forms. Section~\ref{Chap-NullSeqGrdSt} is devoted to a discussion of the basic notions and results we are using. The Caccioppoli type estimate is proven in Section~\ref{Chap-Caccio}. The proof of the main theorem is then presented in Section~\ref{Chap-Shnol}. Finally, in Section~\ref{Chap-Dir} we shortly discuss Dirichlet forms and the possible extension of our result to this setting, where we mainly focus on discrete Schr\"odinger operators on infinite graphs.


\section{Weyl-type argument}
\label{Chap-Set}
In the present section we consider a more general situation than considered hitherto. We provide a short summary of the concepts we are using. Let $\Omega$ be a locally compact, separable and connected metric space and $m$ be a  positive Radon measure with support $\Omega$. Let $f,g:\Gw \to (0,\infty)$. We write $f \asymp g$ in $\Gw$ if there exists a positive constant $C$ such that $C^{-1}g \leq f \leq Cg$ in $\Gw$.

\medskip

In order to prove our main result we use a Weyl sequence argument. In the general situation we are dealing with, it is convenient to work with the form associated with the operator. Recall that a {\em form} $q:\Dd(q)\times \Dd(q)\to\CM$ defined on a linear subspace of a (complex) Hilbert space $\Hh$ is linear in the first component and complex linear in the second component. Furthermore, $q$ is called {\em symmetric} if $q(v,w)=\overline{q(w,v)}$ for all $v,w\in\Dd(q)$. It is called {\em semibounded} if there is a constant $c\in\RM$ such that $q(v,v)\geq c \|v\|^2$ for all $v\in\Dd(q)$. If $q$ is a symmetric semibounded form with a constant $c$, then  $\|v\|^2_q:= q(v,v) + (1-c)\|v\|^2$ defines a norm on $\Dd(q)$ satisfying the parallelogram identity. We mainly deal with forms $q$ that are nonnegative ($q\geq 0$), and hence  $q$ is semibounded, where the constant $c$ is equal to zero. The form $q$ is called {\em closed} if $(\Dd(q),\|\cdot\|_q)$ is additionally complete. Every symmetric semibounded form defines a quadratic form by $q(u):= q(u,u)$ for $u\in\Dd(q)$. For every closed, symmetric, nonnegative form there is a unique self-adjoint operator $H$ associated with $q$. The spectrum of $H$ is denoted by $\sigma(H)$. We denote by $\Dd_{\loc}(q)$ the local form domain of $q$, see \cite{FrLeWi14}.

\medskip

The following proposition is a Weyl sequence assertion stated for forms. The result is proven in \cite[Lemma~1.4.4]{Stollmann01}, see also \cite[Proposition~2.1]{BoLeSt09}. We provide a proof for the convenience of the reader.

\begin{proposition}[\cite{Stollmann01}]
\label{Prop-CritWeyl}
Let $q:\Dd(q)\times \Dd(q)\to\CM$ be a closed, symmetric, semibounded form, and let $H$ be the associated self-adjoint operator. Then the following assertions are equivalent:
\begin{itemize}
\item[(i)] $\lambda\in\sigma(H)$
\item[(ii)] There exists a sequence $(w_n)$ in $\Dd(q)$ with $\lim\limits_{n\to\infty}\|w_n\|=1$ such that
\begin{equation}\label{Ws}
\lim_{n\to\infty} \;
	\sup_{v\in\Dd(q), \|v\|_q\leq 1}
		\Big|
			q(w_n,v)-\lambda  \langle w_n,v\rangle
		\Big|
			= 0.
\end{equation}
\end{itemize}
\end{proposition}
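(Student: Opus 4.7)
The proof translates the classical Weyl criterion for self-adjoint operators into the language of the form $q$, using the first representation theorem, which yields $\Dd(H)\subset\Dd(q)$ and the identity $q(u,v)=\langle Hu,v\rangle$ for $u\in\Dd(H)$, $v\in\Dd(q)$. A preliminary observation that will be used in both directions is that semiboundedness $q(v,v)\ge c\|v\|^2$ implies
$$
\|v\|_q^2 \;=\; q(v,v)+(1-c)\|v\|^2 \;\ge\; \|v\|^2,
$$
so $\|v\|\le\|v\|_q$. Since $H$ is self-adjoint, $\gs(H)\subset\RM$, and it suffices to treat real $\lambda$.

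For the implication (i) $\Rightarrow$ (ii), I would invoke the classical Weyl criterion for the self-adjoint operator $H$ to produce $(u_n)\subset\Dd(H)$ with $\|u_n\|=1$ and $\|(H-\lambda)u_n\|\to 0$, and then set $w_n:=u_n$. For every $v\in\Dd(q)$ with $\|v\|_q\le 1$ one has
$$
|q(w_n,v)-\lambda\langle w_n,v\rangle| \;=\; |\langle (H-\lambda)w_n,v\rangle| \;\le\; \|(H-\lambda)w_n\|\,\|v\| \;\le\; \|(H-\lambda)w_n\|,
$$
where the last inequality uses the preliminary observation. Taking the supremum over such $v$ and sending $n\to\infty$ yields \eqref{Ws}.

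For (ii) $\Rightarrow$ (i), I would argue by contrapositive. Assume $\lambda\notin\gs(H)$, so the resolvent $R:=(H-\lambda)^{-1}$ is bounded on $\Hh$. Given any sequence $(w_n)\subset\Dd(q)$ with $\|w_n\|\to 1$, the natural test function is $v_n:=R w_n\in\Dd(H)\subset\Dd(q)$, for which a direct computation (using $v_n\in\Dd(H)$ and $\lambda\in\RM$) gives
$$
q(w_n,v_n)-\lambda\langle w_n,v_n\rangle \;=\; \langle w_n,(H-\lambda)v_n\rangle \;=\; \|w_n\|^2.
$$
The main obstacle is to bound $\|v_n\|_q$ uniformly in $n$ so that $\tilde v_n:=v_n/\|v_n\|_q$ is an admissible test function in (ii). Since $Hv_n=\lambda v_n+w_n$, one has $q(v_n,v_n)=\langle Hv_n,v_n\rangle=\lambda\|v_n\|^2+\langle w_n,v_n\rangle$, which together with $\|v_n\|\le\|R\|\,\|w_n\|$ gives $\|v_n\|_q\le C\|w_n\|$ for some constant $C$ independent of $n$. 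Hence
$$
|q(w_n,\tilde v_n)-\lambda\langle w_n,\tilde v_n\rangle| \;=\; \frac{\|w_n\|^2}{\|v_n\|_q} \;\ge\; \frac{\|w_n\|}{C},
$$
which is bounded away from zero for large $n$ since $\|w_n\|\to 1$, contradicting the limit in \eqref{Ws}. This contradicts (ii) and completes the plan.
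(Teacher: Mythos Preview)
Your proof is correct and follows essentially the same approach as the paper: for (i)$\Rightarrow$(ii) you take a classical Weyl sequence in $\Dd(H)\subset\Dd(q)$ and use $q(u,v)=\langle Hu,v\rangle$, and for (ii)$\Rightarrow$(i) you argue by contradiction with the test vector $v_n=(H-\lambda)^{-1}w_n$, exactly as the paper does. Your version is in fact slightly more explicit, since you spell out why $\|v_n\|_q\le C\|w_n\|$ (the paper simply asserts that $\sup_n\|(H-\lambda)^{-1}w_n\|_q<\infty$), and your (i)$\Rightarrow$(ii) avoids the paper's unnecessary case distinction between eigenvalues and non-eigenvalues.
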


\begin{proof}
(i)$\Rightarrow$ (ii): Suppose that $\lambda$ admits a normalized eigenfunction $u$ of $H$, then set $w_n:=u$  and (ii) follows. Otherwise, let $(\tilde{w}_n)$ be a (classical) Weyl sequence of the operator $H$ for $\lambda$. In general, $(\tilde{w}_n)$ is not contained in the domain $\Dd(q)$. However, it is straightforward to find an approximating sequence $(w_n)$ satisfying (\ref{Ws}) by using that $\Dd(q)$ is a core of $\Dd(H)$.

(ii)$\Rightarrow$ (i): Let $(w_n)$ be a sequence in $\Dd(q)$ satisfying $\lim_{n\to\infty}\|w_n\|=1$ and \eqref{Ws}. Assume, to the contrary, that $\lambda\in\rho(H)$. The boundedness of the resolvent $(H-\lambda)^{-1}$ implies the finiteness of $C:=\sup_{n\in\NM} \|(H-\lambda)^{-1} w_n\|_q$. Since $q(u,v)=\langle Hu,v\rangle$ for $u,v\in D(q)$, we derive with the previous considerations
$$
\|w_n\|^2
	= \big|
			q(w_n,(H-\lambda)^{-1}w_n)-\lambda  \langle w_n,(H-\lambda)^{-1}w_n\rangle
		\big|
	\leq C \sup_{v\in\Dd(q), \|v\|_q\leq 1}
		\Big|
			q(w_n,v)-\lambda  \langle w_n,v\rangle
		\Big|
	\,.
$$
The term on the right hand side tends to zero if $n\to\infty$, contradicting $\lim\limits_{n\to\infty}\|w_n\|=1$.
\end{proof}


\section{Null-sequence and the ground state transform}
\label{Chap-NullSeqGrdSt}
Let $\Omega$ be a domain in $\R^d$ (or a noncompact $d$-dimensional connected Riemannian manifold). Let $\ab:=\ab_H$ be the form associated with the Schr\"odinger operator $H= -\diver (A\nabla) + V$ in $\Gw$.
A nonzero function $u\in W^{1,2}_\loc(\Omega)$ is called {\em $H$-harmonic}  in $\Gw$ if $\ab(u,v)=0$ for every $v\in\core$. A function $u\in W^{1,2}_\loc(\Omega)$ is called {\em $H$-superharmonic}  in $\Gw$ if $\ab(u,v)\geq 0$ for every nonnegative function $v\in\core$.
The cone of all positive $H$-harmonic functions in $\Omega$ is denoted by $\Cc_H(\Omega)$.

Next, we introduce the notion of a ground state which is a central object of our considerations.
\begin{definition}
\label{mg}{\em
Let $K\Subset \Gw$. A positive $H$-harmonic function $u$ in $\Gw\setminus K$ is called a {\em positive $H$-harmonic of minimal growth in a neighborhood of infinity in $\Omega$} if for every $K\Subset K'\Subset \Omega$ with smooth boundary and each positive $H$-superharmonic $v\in C(\overline{\Omega\setminus K'})$, the inequality $u\leq v$ on the boundary $\partial K'$ implies $u\leq v$ in $\Omega\setminus K'$.  A positive solution $u\in\Cc_H(\Omega)$ that has minimal growth at infinity in $\Omega$ is called a {\em ground state} of $H$ in $\Omega$.}
\end{definition}

\begin{definition}
\label{Def-NullSeq}{\em
Let $H$ be an operator of the form \eqref{H}, and assume that the quadratic form $\ab:=\ab_H$ is nonnegative on $\core$. A sequence $(\varphi_n)\subset  C_0^\infty(\Omega)$ of nonnegative functions is called a {\em null-sequence} for $\ab$ if there exists a ball $B\Subset \Gw$ such that $\int_B\varphi_n^2 \dm=1$ and $\lim_{n\to\infty}\ab(\varphi_n)=0$. Furthermore, a positive function $\varphi\in\Dd_{\loc}$ is called a {\em null-state} of $\ab$ if there exists a null-sequence $(\varphi_n)$ such that $(\varphi_n)$ converges in $L^2_{\loc}(\Omega)$ to $\varphi$.}
\end{definition}

The following characterization of criticality is well known in
various contexts, we refer to the following papers \cite{Pi07,PiTi06,Pinsky95,KePiPo16} and references therein.
\begin{theorem}[Criticality characterization]\label{t:char_criticality}
Let $H$ be an operator of the form \eqref{H}, and assume that the quadratic form $\ab_H$ is nonnegative on $\core$. Then, the following assertions are equivalent:
\begin{itemize}
  \item [(i)] $H$ is critical in $\Gw$.
  \item [(ii)]  $H$ admits a ground state in $\Gw$.
    \item [(ii)] $H$ does not admit a positive minimal Green function  in $\Gw$.
    \item [(iii)] $H$ admits a unique  (up to a multiplicative constant) positive $H$-superharmonic function  in $\Gw$.
\item [(iv)]  For any open ball $B\Subset \Gw$, there is a null-sequence $(\vgf_{n})$ such that $\int_B \vgf_{n}(x)^2\dm=1$ for all $n\geq 0$.
\item [(v)] There  exists a null-sequence $(\vgf_{n})$ satisfying $0\leq \vgf_{n}\leq v$ in $\Gw$, where $v$ is a positive $H$-harmonic function on $\Gw$, and $\vgf_{n}(x)\to v(x)$  locally uniformly in $\Gw$.
\end{itemize}
In particular, $\vgf$ is a null-state if and only if it is a ground state.
\end{theorem}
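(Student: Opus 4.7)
The plan is to establish (i) $\Leftrightarrow$ (ii$'$) $\Leftrightarrow$ (ii) $\Leftrightarrow$ (iii) as a single block drawn from classical criticality theory, and then to close the cycle through (ii) $\Rightarrow$ (iv) $\Rightarrow$ (v) $\Rightarrow$ (i), after which the null-state characterization follows almost for free. (I write (ii$'$) for the second statement labelled (ii) in the theorem, i.e.\ the absence of a positive minimal Green function.)

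For the classical block, (i) $\Leftrightarrow$ (ii$'$) rests on the standard fact that $H\ge 0$ together with the existence of a positive minimal Green function is precisely what allows one to subtract a small, nonzero, nonnegative $W$ while preserving nonnegativity of $\ab_H$; conversely, if no such Green function exists, no such perturbation is possible. The implication (ii$'$) $\Rightarrow$ (iii) is obtained via an exhaustion $\{\Gw_n\}$ of $\Gw$: the normalized Dirichlet Green functions either converge to a positive $H$-harmonic function of minimal growth in a neighborhood of infinity (giving (ii)) or to a positive minimal Green function (contradicting (ii$'$)). Uniqueness of the positive $H$-superharmonic function follows from Harnack's inequality together with the minimal-growth property; the converse (iii) $\Rightarrow$ (ii$'$) is immediate since a Green function and a positive harmonic function are linearly independent. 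All of these are in \cite{Agm83,Pi07,PiTi06,Pinsky95}.

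For (ii) $\Rightarrow$ (iv), the central tool is the \emph{ground state transform}. Given the ground state $\vgf$ and a Lipschitz cut-off $\eta_n\in\core$, the identity
\[
\ab(\eta_n\vgf) \;=\; \int_\Gw \vgf^{\,2}\,\langle A\nabla\eta_n,\nabla\eta_n\rangle\dm,
\]
obtained by integrating the equation $H\vgf=0$ against the test function $\eta_n^{\,2}\vgf$, reduces the problem to a purely geometric construction: choose $\eta_n\equiv 1$ on a fixed ball $B\Subset\Gw$ and decaying sufficiently slowly on annular layers so that the right-hand side tends to zero while $\int_B(\eta_n\vgf)^2\dm$ is bounded away from zero. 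The minimal-growth property of $\vgf$ is exactly what permits such a construction, typically via a logarithmic cut-off calibrated against the oscillation of $\vgf$ on concentric annuli; this is the step I expect to be the main technical obstacle. For (iv) $\Rightarrow$ (v), the Caccioppoli-type estimate of Section~\ref{Chap-Caccio} bounds $\nabla\vgf_n$ locally in $L^2$, so a subsequence converges in $L^2_\loc$ and, by Harnack's inequality, locally uniformly to a positive $H$-harmonic function $v$ dominating the sequence after renormalization.

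For (v) $\Rightarrow$ (i), fix any nonnegative, nonzero $W\in L^p_\loc(\Gw)$ with $p>d/2$ and note
\[
\ab_{H-W}(\vgf_n) \;=\; \ab_H(\vgf_n) \;-\; \int_\Gw W\,\vgf_n^{\,2}\dm.
\]
The first term tends to zero by the null-sequence property, while, since $\vgf_n\to v>0$ locally uniformly and $W$ may be localized on a ball where it is bounded below by a positive constant on a set of positive measure, the second term is bounded away from zero for all large $n$. Hence $\ab_{H-W}(\vgf_n)<0$ for some $n$, so $H-W$ is supercritical and $H$ is critical. Finally, the null-state assertion follows at once: an $L^2_\loc$-limit $\vgf$ of a null-sequence is positive and, by passing to the limit in $\ab(\vgf_n,v)=0$ using the Caccioppoli control, $H$-superharmonic in $\Gw$; the uniqueness in (iii) forces $\vgf$ to coincide with the ground state up to a multiplicative constant, while the construction in (ii) $\Rightarrow$ (iv) shows that every ground state arises in this way.
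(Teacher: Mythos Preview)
The paper does not give a proof of this theorem at all: it is stated as a well-known characterization, with a pointer to \cite{Pi07,PiTi06,Pinsky95,KePiPo16} and references therein. So there is no ``paper's own proof'' to compare against; your sketch is essentially a reconstruction of the argument in those references, and the overall architecture (classical block for (i)--(iii), then closing through null-sequences, then reading off the null-state statement from uniqueness) is the standard one.

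Two remarks on places where your sketch departs from, or is looser than, the cited literature. First, for (ii)$\Rightarrow$(iv) you propose to build the null-sequence by an explicit logarithmic cut-off ``calibrated against the oscillation of $\vgf$''. The references do not proceed this way: in \cite{PiTi06} the implication is obtained by contradiction---if no null-sequence exists for a given ball $B$, one gets a Poincar\'e-type inequality $\int_B\psi^2\dm\le C\,\ab_H(\psi)$ on $\core$, from which a nontrivial Hardy weight is manufactured, contradicting criticality. Your direct construction would require knowing in advance that the ground-state-transformed weighted manifold $(\Gw,\vgf^2\dm)$ is parabolic, which is exactly the content of the implication, so without the contradiction route there is a risk of circularity. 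Second, in (iv)$\Rightarrow$(v) the compactness/Harnack argument you describe yields local uniform convergence to a positive $H$-harmonic $v$, but not the pointwise domination $0\le\vgf_n\le v$ claimed in (v). In the literature that bound is obtained by \emph{constructing} the null-sequence from the start as $\vgf_n=\eta_n\vgf$ with $0\le\eta_n\le 1$ (via the ground-state transform), rather than by extracting it a posteriori from an arbitrary null-sequence.
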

Note that $\dim\big(\Cc_H(\Omega)\big)=1$ holds whenever $H$ is critical. Thus, a ground state is unique (up to a multiplicative constant).

Another important ingredient is the ground state transform. For a positive continuous function $h\in W^{1,2}_\loc(\Omega)\cap C(\Omega)$, define
$$T_h(v):=\frac{v}{h}\,.$$ Note that $T^{-1}_h=T_{h^{-1}}$ holds.
The operator $$H_h:= T_h\circ H\circ T^{-1}_h$$ is called the {\em $h$-transform} of $H$. If $\varphi\in \Cc_H(\Omega)$, then $H_\varphi$ is called a {\em ground state transform} of $H$. Furthermore, $T_\varphi|_{\Dd_{\loc}\cap L^2(\Omega)}$ extends to an isometry between $L^2(\Omega,\dm)$ and $L^2(\Omega,\varphi^2 \dm)$, see for example \cite[Proposition~4.15]{DeFrPi14}. Hence, $\sigma(H)=\sigma(H_\varphi)$ follows. The reader is referred to \cite{DeFrPi14,KePiPo16} and references therein for further background on the ground state transform.

Consider the Schr\"odinger operator $H= -\diver (A\nabla) + V$ on $\Gw$ with associated nonnegative form
$$
\ab(u,v) \;
	:= \; \int_\Omega
			\left(\langle A\nabla u, \nabla v\rangle + V u\overline{v}\right)
		\dm
	\,.
$$
Suppose that $H$ is critical in $\Gw$ with ground state $\varphi$. Then the quadratic form $\ab_\varphi$ of the operator $H_\varphi$ is given by
$$
\ab_\varphi(u,v) \;
	:= \; \int_\Omega \langle A\nabla u, \nabla v\rangle\dmu (x),
$$
where the measure $\gm$ is given by $\varphi^2\dm$. For more details see for example \cite{DeFrPi14}. It is worth mentioning that $H_\varphi$ is critical in $\Gw$, and its ground state is given by the constant function $1$. This fact will be intensively used. Finally, let us point out that we obtain the following Cauchy-Schwarz inequality
\begin{align*} \tag{C.S.}
\left|
	\int_\Omega u v \langle A\nabla v, \nabla u\rangle \dmu
\right|
	&\leq \int_\Omega |u| |\langle A\nabla v, \nabla v\rangle|^{\frac{1}{2}}  |v|  |\langle A \nabla u, \nabla u\rangle|^{\frac{1}{2}} \dmu\\
	&\leq \left(\int_\Omega |u|^2 \langle A\nabla v, \nabla v\rangle \dmu\right)^{\frac{1}{2}}
		\left(\int_\Omega |v|^2 \langle A\nabla u, \nabla u\rangle \dmu\right)^{\frac{1}{2}}
\end{align*}
by using that $A$ is nonnegative definite.

\section{Caccioppoli-type estimate}
\label{Chap-Caccio}
Throughout this section we consider a nonnegative form $\ab$ given by
$$
\ab(u,v) \;
	:= \; \int_\Omega \langle A\nabla u, \nabla v\rangle\ d\mu (x)
	\,,
$$
with the associated self-adjoint operator $L:=-\diver\! (A\nabla)$ on $L^2(\Gw,\dmu)$. For a real-valued potential $W\in L^\infty(\Gw)$, consider the self-adjoint operator $L+W:=-\diver\! (A\nabla)+W$ on $L^2(\Gw,\dmu)$. The associated symmetric form $q$ of $L+W$ is defined by $q(u,v) = \ab(u,v) + \langle W u, v \rangle$, where $\langle \cdot, \cdot \rangle$ and $\|\cdot\|$ are the inner product and the norm on $L^2(\Gw,\dmu)$. Clearly, $q$ is semibounded with constant $c:=-\|W\|_\infty$ as $\ab$ is nonnegative. Thus, $\|\cdot\|_q$ is defined by $\|v\|_q^2 := q(v,v) + (1+\|W\|_\infty) \|v\|^2$ (see Section~\ref{Chap-Set}). We have the following Caccioppoli-type estimate.
\begin{proposition}
\label{Prop-Caccio}
Let $W\in L^\infty(\Gw)$ be real-valued, and let $u\in W^{1,2}_\loc(\Omega)$ be a generalized eigenfunction of the operator  $L+W$ with eigenvalue $\lambda\in\RM$ satisfying $|u|\leq 1$ in $\Gw$. Then
$$
\int_\Omega |v|^2  \langle A\nabla u,\nabla u\rangle \dmu \;
	\leq \; \Big(1 + \sqrt{1+|\lambda|+\|W\|_\infty}\Big)^2
$$
holds for every $v\in\Dd(q)$ satisfying $\|v\|_q\leq 1$.
\end{proposition}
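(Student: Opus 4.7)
The plan is a Caccioppoli-type calculation: I would test the eigenvalue identity $q(u,\cdot)=\lambda\langle u,\cdot\rangle$ against the function $|v|^2 u$, control the cross term produced by the Leibniz rule via the Cauchy--Schwarz inequality (C.S.) stated at the end of Section~\ref{Chap-NullSeqGrdSt}, and absorb the zeroth-order contributions using the hypothesis $|u|\leq 1$. A short approximation step reduces matters to $v\in C_0^\infty(\Omega)$ (so that $|v|^2 u$ is an admissible test function), invoking density of $C_0^\infty(\Omega)$ in $(\Dd(q),\|\cdot\|_q)$; without loss of generality I may also assume $v$ is real, the complex case following by splitting into real and imaginary parts.

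Setting $I:=\int_\Omega |v|^2 \langle A\nabla u,\nabla u\rangle \dmu$, I would plug $v^2 u$ into the eigenvalue identity and expand $\nabla(v^2 u)=2vu\nabla v+v^2\nabla u$ to get
$$
I + 2\int_\Omega v u \langle A\nabla u,\nabla v\rangle \dmu + \int_\Omega W v^2 u^2 \dmu \;=\; \lambda\int_\Omega v^2 u^2 \dmu.
$$
Applying (C.S.) (with the roles of $u$ and $v$ interchanged) and $|u|\leq 1$ bounds the cross term by
$$
2\sqrt{I}\,\sqrt{\int_\Omega u^2\langle A\nabla v,\nabla v\rangle\dmu} \;\leq\; 2\sqrt{I\,\ab(v)},
$$
while $|u|\leq 1$ bounds the remaining zeroth-order terms in absolute value by $(|\lambda|+\|W\|_\infty)\|v\|^2$. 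From the identity $\|v\|_q^2=q(v,v)+(1+\|W\|_\infty)\|v\|^2=\ab(v)+\int_\Omega W v^2 \dmu+(1+\|W\|_\infty)\|v\|^2$ a direct computation yields the clean bound $\ab(v)+\|v\|^2 \leq \|v\|_q^2 \leq 1$.

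Writing $a:=\ab(v)$ and $b:=\|v\|^2$, the combined estimate becomes $I\leq 2\sqrt{I\,a}+(|\lambda|+\|W\|_\infty)b$, equivalently $(\sqrt{I}-\sqrt{a})^2\leq a+(|\lambda|+\|W\|_\infty)b$. Since $a+b\leq 1$, a short case analysis gives $a+(|\lambda|+\|W\|_\infty)b\leq 1+|\lambda|+\|W\|_\infty$, hence
$$
\sqrt{I}\;\leq\; \sqrt{a}+\sqrt{1+|\lambda|+\|W\|_\infty}\;\leq\; 1+\sqrt{1+|\lambda|+\|W\|_\infty},
$$
which is precisely the asserted bound. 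The main technical obstacle, as I see it, is the justification of $|v|^2 u$ as a test function and the passage from $v\in C_0^\infty(\Omega)$ to arbitrary $v\in\Dd(q)$: this requires a standard cut-off and density argument, using $u\in W^{1,2}_\loc(\Omega)$ together with the uniform bound $|u|\leq 1$ to pass to the limit in each integral, with Fatou's lemma applied to $I$ itself if needed.
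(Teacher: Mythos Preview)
Your proposal is correct and follows essentially the same route as the paper: test the eigenvalue identity with $|v|^2 u$ for $v\in C_0^\infty(\Omega)$, expand via the Leibniz rule, bound the cross term with the Cauchy--Schwarz inequality (C.S.), use $|u|\leq 1$ together with the observation that $\|v\|_q\leq 1$ forces $\ab(v)\leq 1$ and $\|v\|\leq 1$, solve the resulting quadratic inequality in $\sqrt{I}$, and then pass to general $v\in\Dd(q)$ by density and Fatou's lemma. The only cosmetic differences are that the paper keeps $v$ complex throughout (writing $|v|^2=\bar v\,v$) rather than reducing to the real case, and it bounds $\ab(v)$ and $\|v\|^2$ by $1$ immediately rather than carrying them as $a$ and $b$; neither changes the argument.
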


\begin{proof}
We first show the desired estimate for $v\in\core$ with $\|v\|_q\leq 1$. Set
$$
z \;
	:= \; \sqrt{\int_\Omega |v|^2  \langle A\nabla u, \nabla u\rangle\dmu}
	\,.
$$
The constraint $\|v\|_q\leq 1$ implies $\|v\|\leq 1$ and $\ab(v,v)\leq 1$ since $\langle W \, v,v\rangle + \|W\|_\infty \|v\|^2\geq 0$. Thus, a short computation yields
\begin{align*}
z^2 \;
	&= \; \left|
			\int_\Omega \big\langle A\nabla \big(\overline{v}\, v \, u\big),\nabla u \big\rangle \dmu
			-
			\int_\Omega u \big\langle A\nabla \big(\overline{v}\, v\big),\nabla u \big\rangle \dmu
		\right|\\
	&\leq \;
		\bigg|
			\int_\Omega
				(\lambda  - W) \, |v|^2  \underbrace{|u|^2}_{\leq 1}
			\dmu
		\bigg|
		+
		\left|
			\int_\Omega
				u\, \overline{v}  \langle A\nabla v, \nabla u\rangle
			\dmu
		\right|
		+
		\left|
			\int_\Omega
				u\, v  \langle A\nabla \overline{v}, \nabla u\rangle
			\dmu
		\right|\\
	&\!\!\overset{\text{C.S.}}{\leq} \; (|\lambda| + \|W\|_\infty) \, \underbrace{\|v\|^2}_{\leq 1}
		+
		2\, \underbrace{
			\left(
				\int_\Omega
					|u|^2  \langle A\nabla v, \nabla v\rangle
				\dmu
			\right)^{\frac{1}{2}}
		}_{\leq \sqrt{\ab(v,v)}\leq 1}
				\left(
			\int_\Omega
				|v|^2  \langle A\nabla u, \nabla u\rangle
			\dmu
		\right)^{\frac{1}{2}}\\
	&\leq \; |\lambda| + \|W\|_\infty + 2 \, z
	\,.
\end{align*}
Since $v$ is compactly supported, the above integrals are all well-defined. An elementary computation implies $0 \leq z \leq 1 + \sqrt{1+|\lambda|+\|W\|_\infty}$ showing the desired estimate for $v\in C_0^\infty(\Omega)$ with $\|v\|_q\leq 1$.

\medskip

Now consider $v\in\Dd(q)$ satisfying $\|v\|_q\leq 1$. Since $ C_0^\infty(\Omega)$ is dense in $\Dd(q)$ with respect to $\|\cdot\|_q$, there exists a sequence $(v_n)\subseteq C_0^\infty(\Omega)$ such that $\lim_{n\to\infty}\|v-v_n\|_q=0$ and $\|v_n\|_q\leq 1$. Thus, $v-v_n$ tends to zero in the $L^2$-norm and so there is no loss of generality in assuming that $(v_n)$ converges $\dmu$-a.e. to $v$. Applying Fatou's Lemma, we get
$$
\int_\Omega |v|^2  \langle A\nabla u,\nabla u\rangle \dmu \;
	\leq \; \liminf_{n\to\infty} \int_\Omega |v_n|^2 \langle A\nabla u,\nabla u\rangle \dmu \;
	\leq \; \Big(1 + \sqrt{1+|\lambda|+\|W\|_\infty}\Big)^2.
$$
Thus, the proposition is proved.
\end{proof}

\section{Shnol-type theorem}
\label{Chap-Shnol}
Throughout this section we consider a nonnegative form $\ab$ given by
$$
\ab(u,v) \;
	:= \; \int_\Omega \langle A\nabla u, \nabla v\rangle\dmu (x)
	\,,
$$
with associated self-adjoint operator $L:=-\diver\! (A\nabla)$ on $L^2(\Gw,\dmu)$. Like in the previous section, $q$ denotes the form associated with $L+W$ for a real-valued potential $W\in L^\infty(\Gw)$.

\begin{proposition}
\label{Prop-Shnol}
Suppose that $L$ is critical in $\Gw$ with the ground state $1$. Let $W\in L^\infty(\Gw)$ be real-valued and $u$ be a generalized eigenfunction $L +W$ with eigenvalue $\lambda\in\RM$. If $|u|\leq 1$ in $\Gw$, then $\lambda\in\sigma(L+W)$.
\end{proposition}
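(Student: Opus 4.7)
\textbf{Proof plan for Proposition~\ref{Prop-Shnol}.}
My approach is to verify the Weyl-sequence criterion of Proposition~\ref{Prop-CritWeyl}(ii) by cutting off $u$ with a null-sequence. Since the ground state of $L$ is the constant function $1$, Theorem~\ref{t:char_criticality}(v) furnishes a null-sequence $(\varphi_{n})\subset C_0^{\infty}(\Omega)$ satisfying $0\leq\varphi_{n}\leq c$ for some $c>0$, with $\varphi_{n}\to c$ locally uniformly in $\Gw$, and with $\ab(\varphi_{n})\to 0$. Set $w_{n}:=\varphi_{n}u$; each $w_{n}$ is compactly supported and belongs to $\Dd(q)$.

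The heart of the argument is a bilinear identity derived from the Leibniz rule $\nabla(\varphi_{n}u)=\varphi_{n}\nabla u+u\nabla\varphi_{n}$ by testing the eigenvalue equation for $u$ against $\psi:=\varphi_{n}v\in C_0^{\infty}(\Omega)$. After cancellation of the terms $\lambda\int\varphi_{n}u\overline{v}\dmu$ and $\int W\varphi_{n}u\overline{v}\dmu$, one arrives at
$$
q(w_{n},v)-\lambda\langle w_{n},v\rangle=\int_{\Gw}u\,\langle A\nabla\varphi_{n},\nabla v\rangle\dmu-\int_{\Gw}v\,\langle A\nabla u,\nabla\varphi_{n}\rangle\dmu.
$$
The Cauchy--Schwarz inequality (C.S.), together with $|u|\leq 1$, bounds the first mixed term by $\sqrt{\ab(\varphi_{n})}\cdot\sqrt{\ab(v,v)}$, and the elementary estimate $\ab(v,v)\leq\|v\|_{q}^{2}$, a consequence of the definition of $\|\cdot\|_{q}$ and $\ab\geq 0$, takes care of the last factor. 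For the second mixed term, (C.S.) yields the bound $\sqrt{\ab(\varphi_{n})}\cdot\bigl(\int_{\Gw}|v|^{2}\langle A\nabla u,\nabla u\rangle\dmu\bigr)^{1/2}$, and here the Caccioppoli-type estimate of Proposition~\ref{Prop-Caccio} bounds the second factor by $1+\sqrt{1+|\lambda|+\|W\|_{\infty}}$ uniformly for $\|v\|_{q}\leq 1$. Thus, with a constant $C=C(\lambda,\|W\|_{\infty})$, one obtains $|q(w_{n},v)-\lambda\langle w_{n},v\rangle|\leq C\sqrt{\ab(\varphi_{n})}$ for every $v\in C_0^{\infty}(\Omega)$ with $\|v\|_{q}\leq 1$; the estimate extends to all $v\in\Dd(q)$ with $\|v\|_{q}\leq 1$ by density of $C_0^{\infty}(\Omega)$ in $(\Dd(q),\|\cdot\|_{q})$, since both sides depend continuously on $v$ in the $\|\cdot\|_{q}$-norm.

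To finish, fix a ball $B_{0}\Subset\Gw$ with $\int_{B_{0}}|u|^{2}\dmu>0$, which exists because $u\not\equiv 0$. For $n$ large, $\varphi_{n}\geq c/2$ on $\overline{B_{0}}$, hence $\|w_{n}\|^{2}\geq(c/2)^{2}\int_{B_{0}}|u|^{2}\dmu>0$, so $\|w_{n}\|$ is bounded away from $0$. Setting $\tilde{w}_{n}:=w_{n}/\|w_{n}\|$, one has $\|\tilde{w}_{n}\|=1$ and
$$
\sup_{v\in\Dd(q),\,\|v\|_{q}\leq 1}\bigl|q(\tilde{w}_{n},v)-\lambda\langle\tilde{w}_{n},v\rangle\bigr|\leq\frac{C\sqrt{\ab(\varphi_{n})}}{\|w_{n}\|}\longrightarrow 0,
$$
so Proposition~\ref{Prop-CritWeyl} yields $\lambda\in\sigma(L+W)$. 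The main obstacle in this scheme is the mixed-gradient term $\int v\langle A\nabla u,\nabla\varphi_{n}\rangle\dmu$, which features $\nabla u$ of the merely $W^{1,2}_{\loc}$ eigenfunction $u$: (C.S.) reduces its control to bounding $\int|v|^{2}\langle A\nabla u,\nabla u\rangle\dmu$ uniformly in $v$, and this is exactly what Proposition~\ref{Prop-Caccio} supplies thanks to the hypothesis $|u|\leq 1$.
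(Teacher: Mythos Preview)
Your proof is correct and follows essentially the same route as the paper: cut off $u$ by a null-sequence for the critical operator $L$, use the Leibniz rule to reduce $q(w_n,v)-\lambda\langle w_n,v\rangle$ to two mixed-gradient terms, bound both via (C.S.) and Proposition~\ref{Prop-Caccio}, and conclude by Proposition~\ref{Prop-CritWeyl}. The only cosmetic difference is the normalization step: the paper splits into the cases $u\in L^2$ and $u\notin L^2$ (using Fatou to get $\|\varphi_n u\|\to\infty$ in the latter), whereas you argue directly that $\|\varphi_n u\|$ is bounded below via the local uniform convergence $\varphi_n\to c$ on a ball where $u$ does not vanish identically---your version is slightly more economical and avoids the case distinction.
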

\begin{proof}
Invoking Theorem~\ref{t:char_criticality}, there exists a null-sequence $(\varphi_n)$ of $L$ satisfying $0\leq \varphi_n\leq 1$ such that $(\varphi_n)$ converges, locally uniformly, to the ground state $1$. If $u\in L^2(\Gw,\dmu)$, then obviously, $\lambda\in\sigma(L+W)$.

So, we may suppose that $u\not\in L^2(\Gw,\dmu)$. Thus, $\lim_{n\to\infty}\|\varphi_n u\|=\infty$ follows by Fatou's Lemma since $\varphi_n u$ converges locally to $u$. Hence, there is an $n_0\in\NM$ such that $\|\varphi_n u\| \geq 1$ for $n\geq n_0$. Define $w_n:=\frac{\varphi_n u}{\|\varphi_n u\|}\,,\; n\geq n_0\,,$ so, $\|w_n\|=1$.

Denote by $q$ the symmetric semibounded form associated with $L+W$. Let $v\in\Dd(q)$ be so that $\|v\|_q\leq 1$. Since $u$ is a generalized eigenfunction of $L+W$, we have
$$
\lambda  \langle w_n,v\rangle
    = \frac{\lambda}{\|\varphi_n u\|}  \langle u,\varphi_n v\rangle
	=  \frac{1}{\|\varphi_n u\|} \, q(u,\varphi_n v) \;
	=  \frac{1}{\|\varphi_n u\|} \, \ab(u,\varphi_n v) + \langle Ww_n,v\rangle.
$$
Hence,  $\big|q(w_n,v) - \lambda  \langle w_n,v\rangle\big| = \frac{1}{\|\varphi_n u\|} \big|\ab(\varphi_n u,v) - \ab(u,\varphi_n v)\big|$. Therefore, the Leibniz rule implies
$$
\Big|
	q(w_n,v) - \lambda  \langle w_n,v\rangle
\Big|\;
	=\; \frac{1}{\|\varphi_n u\|}
		\left|
			\int_\Omega u 1  \langle A\nabla\varphi_n, \nabla v \rangle \dmu
			-
			\int_\Omega \bar{v} 1  \langle A\nabla u, \nabla \varphi_n \rangle \dmu
		\right|.
$$
Applying the triangle and the Cauchy-Schwarz inequalities, the previous terms are estimated by
\begin{align*}
&\frac{1}{\|\varphi_n u\|} \left[
	\left(
		\int_\Omega
			|u|^2  \langle A\nabla\varphi_n, \nabla \varphi_n \rangle
		\dmu
	\right)^{\frac{1}{2}}
	\ab(v,v)^{\frac{1}{2}}
	+
	\left(
		\int_\Omega
			|v|^2  \langle A\nabla u, \nabla u \rangle
		\dmu
	\right)^{\frac{1}{2}}
	\ab(\varphi_n,\varphi_n)^{\frac{1}{2}}
\right]\\
\leq \; & \frac{\ab(\varphi_n,\varphi_n)^{\frac{1}{2}}}{\|\varphi_n u\|}
		\left(
			1+
			\left(
				\int_\Omega
					|v|^2  \langle A\nabla u, \nabla u \rangle
				\dmu
			\right)^{\frac{1}{2}}
		\right),
\end{align*}
where $|u|\leq 1$ and $\|v\|_q\leq 1$ is used to obtain the second line.

Due to Proposition~\ref{Prop-Caccio}, the integral $\int_\Omega |v|^2  \langle A\nabla u, \nabla u \rangle \dmu$ is bounded by a constant (depending on $\|W\|_\infty$ and $\lambda$), and in particular, it is well-defined. Since $\|\varphi_n u\| \geq 1$ for $n\geq n_0$, we deduce
$$
\Big|
	q(w_n,v)-\lambda  \langle w_n,v\rangle
\Big|
	\!\leq \! \big(2\!+\!\sqrt{1+|\lambda|+\|W\|_\infty}\big)
			\ab(\varphi_n,\varphi_n)^{\frac{1}{2}},
		\qquad
$$
for all $n\geq n_0$ and $v\in\Dd(q)$ with $\|v\|_q\leq 1$. Since $(\varphi_n)$ is a null-sequence of $L$, the previous estimate implies that $\big|q(w_n,v)-\lambda  \langle w_n,v\rangle \big|$ tends to zero uniformly in $v\in\Dd(q)$ with $\|v\|_q\leq 1$. Additionally, $q$ is a closed, symmetric, semibounded form since $\ab$ is nonnegative and $W\in L^\infty(\Gw)$ is real-valued. Consequently, $\lambda\in\sigma(L+W)$ follows by Proposition~\ref{Prop-CritWeyl}.
\end{proof}

\medskip

Now, the main theorem is a direct consequence of the previous considerations.

\medskip

\begin{proof}[Proof of Theorem~\ref{Theo-Shnol}]
Since $H+W$ is critical, Theorem~\ref{t:char_criticality} implies that $H+W$ admits a ground state $\varphi$. Using the ground state transform $T_\varphi$, the operators $L-W$ and $H$ are unitary equivalent where $L:=T_\varphi\circ (H+W)\circ T_\varphi^{-1}$. Furthermore, $T_\varphi\varphi= 1$ is the ground state of the critical operator $L$. Set $u_\varphi=T_\varphi u$. Then $u_\varphi$ is a generalized eigenfunction of $L-W$ with eigenvalue $\lambda\in\RM$ and $|u_\varphi|\leq 1$. Applying Proposition~\ref{Prop-Shnol} to $L$ with generalized eigenfunction $u_\varphi$ of $L-W$ leads to $\lambda\in\sigma(L-W)=\sigma(H)$.
\end{proof}

\medskip

\begin{proof}[Proof of Corollary~\ref{Cor-Shnol-sub}]
According to \cite[Theorem~4.6]{Pi88} there is a nonnegative $W\in C_0(\Gw)$ such that $H-W$ is critical in $\Gw$, and let $\vgf$ be the corresponding ground state. Since $G \asymp \vgf$ \cite[Corollary~4.3]{Pi88} in $\Gw\setminus B(x_0,r)$, it follows that $|u|\leq C\vgf$ in $\Gw$ for some $C>0$. Hence, the corollary follows from Theorem~\ref{Theo-Shnol} applied to the generalized eigenfunction $u/C$ of $H$.
\end{proof}


\section{Dirichlet forms}
\label{Chap-Dir}

In the following, we provide a short summary on Dirichlet forms. The more interested reader is referred to \cite{Fukushima80,BouleauHirsch91,MaRoeckner92,FukushimaOshimaTakeda94,Davies95} for further background on Dirichlet forms.

Let $X$ be a locally compact, connected, separable metric space with positive measure $m$. Consider a dense subspace $\Dd\subseteq L^2(X,m)$ and a sesquilinear, nonnegative, symmetric map $\Ee:\Dd\times\Dd\to\RM$. If $\Dd$ is closed in the energy norm $\|\cdot\|_\Ee^2:=\Ee(\cdot,\cdot)+\|\cdot\|^2$, then $\Ee$ is called a {\em closed (symmetric) form} in $L^2(X,m)$, where $\|\cdot\|$ denotes the $L^2$-norm. We associate to $\Ee$ the unique self-adjoint operator $(H,D(H))$  satisfying $D(H)\subseteq\Dd$ and $\Ee(\psi,u)=\langle H\psi,u\rangle$ for all $\psi\in D(H)$ and $u\in\Dd$. We use the short notation $\Ee(u)=\Ee(u,u)$ for $u\in\Dd$.

\medskip

The (symmetric) closed form $\Ee$ is called {\em (symmetric) Dirichlet form} whenever $T\circ u\in\Dd$ and $\Ee(T\circ u)\leq \Ee(u)$ holds for all $u\in\Dd$ and every normal contraction $T:\RM\to\RM$. Recall that $T:\RM\to\RM$ is called {\em normal contraction} if $T(0)=0$ and $|T(x)-T(y)|\leq |x-y|$ for each $x,y\in\RM$. A Dirichlet form is called {\em regular} if $\Dd\cap C_0(X)$ is dense in $(\Dd,\|\cdot\|_{\mathcal{E}})$ and in $(C_0(X),\|\cdot\|_\infty)$. Furthermore, a Dirichlet form is called {\em strongly local} if $\Ee(u,v)=0$ whenever $u\in\Dd$ is constant a.s. on the support of $v\in\Dd$.

\medskip

The Beurling-Deny formula asserts that for every symmetric regular Dirichlet form $\Ee$ there exists a unique decomposition
$$
\Ee(u,v) \;
	= \;\Ee^{(a)}(u,v)+\Ee^{(b)}(u,v)+\Ee^{(c)}(u,v)
	\qquad
	u,v\in\Dd\,,
$$
where $\Ee^{(a)}$ is the killing term, $\Ee^{(b)}$ is the jump part and $\Ee^{(c)}$ is the strongly local part. Each of this parts can be represented as
$$
\Ee^{(\ast)}(u,v) \;
	= \; \int_X  \dmu^{(\ast)}(u,v)
	\qquad
	\ast\in\{a,b,c\},
$$
for a suitable Radon measure $\mu^{(\ast)}$. For  $\ast\in\{a,b,c\}$, the  measure  $\mu^{(\ast)}$ satisfies the Cauchy-Schwarz inequality
$$
\left|\int_X \psi \varphi  \dmu^{(\ast)}(u,v)\right| \;
	\leq \; \left(\int_X |\psi|^2 \dmu^{(\ast)}(u,u) \right)^{\frac{1}{2}}
		\left(\int_X |\varphi|^2 \dmu^{(\ast)}(v,v) \right)^{\frac{1}{2}}.
$$
Given a regular Dirichlet form, it extends to its local form domain, see \cite[Section~3.1]{FrLeWi14} for details. An element $u$ in its local domain is called a {\em generalized eigenfunction with eigenvalue $\lambda\in\RM$} if $\Ee(u,v)=\lambda\langle u,v\rangle$ for all $v\in\Dd\cap C_0(X)$.

\medskip

In order to generalize Proposition~\ref{Prop-Caccio} and Proposition~\ref{Prop-Shnol} to regular Dirichlet forms, one needs additionally an integrated version of the Leibniz rule \cite[Theorem~3.7]{FrLeWi14} and a suitable notion of a ground state and null-sequence. The ground state transform is established for a class of strictly local regular Dirichlet forms in \cite{LeStVe09,LeStVe11}. Additionally, the authors show the existence of a weak positive solution (Allegretto-Piepenbrink-type theorem) by approximating such a solution under suitable assumptions on the Dirichlet form. In the following, we focus on regular Dirichlet forms over a discrete set $X$ since the existence of a ground state with corresponding null-sequence is known under reasonable assumption, see \cite{KeLe12,KePiPo16}. The statements of Theorem~\ref{t:char_criticality} stay valid in this setting while some estimates need to be adjusted since the chain rule does not hold for discrete Laplacians, see \cite{KePiPo16}.

\medskip

Let $X$ be a countable infinite set equipped with the discrete topology, and $m:X\to(0,\infty)$ be a measure on $X$ with full support. Then $\ell^2(X,m)$ is defined by all $u:X\to\RM$ satisfying $\sum_{x\in X}u(x)^2\, m(x)<\infty$ equipped with the standard scalar product weighted by $m$. A symmetric weighted graph over $X$ is a map $b:X\times X\to[0,\infty)$ satisfying $b(x,x)=0\,,\; x\in X\,,$ $b(x,y)=b(y,x)\,,\; x,y\in X$ and $\sum_{y\in X} b(x,y) <\infty$ for all $x\in X$. We say that $x,y\in X$ are \emph{adjacent} or \emph{neighbors} or \emph{connected by an edge}
if $b(x,y)>0$. A graph $b$ is called {\em connected} if there is a path connecting every two vertices $x,y\in X$. According to \cite[Theorem~7]{KeLe12}, there exists for every regular Dirichlet form $Q$ on $\ell^2(X,m)$ a graph $b$ on $X$ with nonnegative $c:X\to[0,\infty]$ such that $Q=Q_{b,c}$, where
$$
Q_{b,c}(u,v) \;
	:= \; \sum_{x,y\in X} b(x,y)\, (u(x)-u(y))\, (v(x)-v(y)) + \sum_{x\in X} u(x)\, v(x)\, c(x)
$$
defined on $\Dd(Q_{b,c}):=\overline{C_0(X)}^{\|\cdot\|_{Q_{b,c}}}$. Note that $C_0(X)$ is the set of all real-valued functions of $X$ with finite support. Denote by $H$ the unique operator associated with $Q$. The integrated version of the Leibniz rule reads as follows
$$
\sum_{x,y\in X} d_b(uv)(x,y)\, d_b w(x,y)
	= \sum_{x,y\in X}  u(x)\, d_b v(x,y)\, d_b(w)(x,y) + \sum_{x,y\in X} v(x)\, d_b u(x,y)\, d_b(w)(x,y)
$$
where $d_b u(x,y):= \sqrt{b(x,y)}\, (u(x)-u(y))$, see \cite[Theorem~3.9]{FrLeWi14} or \cite[Lemma~3.2]{HuaMasKelWoj13}.

\medskip

Let $F(X)$ be the set of all $u:X\to\RM$ satisfying $\sum_{y\in X} b(x,y) |f(y)|<\infty$ for each $x\in X$. A function $u\in F(X)$ is called {\em harmonic} if $Hu=0$ in $X$. Let $Q\geq 0$ on $C_0(X)$. Every function $W:X\to\RM$ defines a quadratic form $Q_W$ on $C_0(X)$ by $Q_W(u):=\sum_{x\in X} W(x) u(x)^2$. Analogously to the continuous setting, $Q\geq 0$ on $C_0(X)$ is said to be {\em subcritical} in $X$ if there is a nonzero $W:X\to [0,\infty)$ such that $Q-Q_W\geq 0$ on $C_0(X)$. Furthermore, a sequence $(\varphi_n)$ of nonnegative functions in $C_0(X)$ is called {\em null-sequence} if there is an $o\in X$ and a constant $C>0$ such that $\varphi_n(o)=C$ for each $n\in\NM$ and $Q(\varphi_n)\to 0$. Note that for a nonnegative quadratic form $Q$ the existence of a positive harmonic function is guaranteed in the case of locally finite graphs \cite{HaesKe11}, or in the critical case \cite{KePiPo16}. In the following, the space of bounded real-valued functions on $X$ equipped with the uniform norm is denoted by $B(X,\RM)$. With this at hand, we can show the discrete analog of Proposition~\ref{Prop-Caccio} and Proposition~\ref{Prop-Shnol}.

\medskip

Let $Q:=Q_{b,0}$ be a regular Dirichlet form on $\ell^2(X,m)$ with zero potential $c\equiv 0$ and $L$ the associated self-adjoint operator. For $W\in B(X,\RM)$, the form associated with $L+W$ is denotes by $Q_{b,W}=Q+Q_W$ which is semibounded with constant $c:=-\|W\|_\infty$ as $Q$ is nonnegative. Let $\|v\|_{Q_{b,W}} := Q(v,v) + (1+\|W\|_\infty) \|v\|^2$.
We have the following Caccioppoli-type estimate.

\begin{proposition}
\label{Prop-DiscrCaccio}
Suppose that $Q:=Q_{b,0}$ is a regular Dirichlet form on $\ell^2(X,m)$ with zero potential $c\equiv 0$ and $L$ the associated self-adjoint operator. For $W\in B(X,\RM)$, consider a generalized eigenfunction $u\in F(X)$ of $L+W$ with eigenvalue $\lambda\in\RM$, such that $|u|\leq 1$ in $X$. Then
$$
\sum_{x,y\in X} v(x)^2 d_bu(x,y)^2 \;
	\leq \; \Big(1 + \sqrt{1+|\lambda|+\|W\|_\infty}\Big)^2
$$
holds for every $v\in\Dd(Q_{b,W})$ satisfying $\|v\|_{Q_{b,W}}\leq 1$.
\end{proposition}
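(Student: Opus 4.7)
The approach is to mirror the proof of Proposition~\ref{Prop-Caccio} line by line, with the discrete Leibniz rule stated just above the proposition (together with the elementary factorization $v^2(x)-v^2(y)=(v(x)+v(y))(v(x)-v(y))$) playing the role of the pointwise identity $\nabla(v^2)=2v\nabla v$. First I would establish the bound for $v\in C_0(X)$ with $\|v\|_{Q_{b,W}}\leq 1$, and then extend to all $v\in\Dd(Q_{b,W})$ by density and Fatou's lemma.

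For the main step, set
$$
z^2:=\sum_{x,y\in X} v(x)^2\, d_b u(x,y)^2
$$
(the sum converges, since $v$ has finite support, $|u|\leq 1$, and $\sum_y b(x,y)<\infty$). The constraint $\|v\|_{Q_{b,W}}^2\leq 1$ implies both $\|v\|^2\leq 1$ and $Q(v,v)\leq 1$, because $\langle Wv,v\rangle+\|W\|_\infty\|v\|^2\geq 0$. Writing $v^2u=v^2\cdot u$ and applying the discrete Leibniz rule with $w=u$ yields
$$
z^2=\sum_{x,y} d_b(v^2 u)(x,y)\, d_b u(x,y)\,-\,\sum_{x,y} u(x)\, d_b(v^2)(x,y)\, d_b u(x,y).
$$
Since $v^2u\in C_0(X)\subseteq\Dd(Q_{b,W})$, the generalized eigenvalue equation identifies the first sum with $\sum_x(\lambda-W(x))\,v(x)^2u(x)^2 m(x)$, which is bounded in absolute value by $(|\lambda|+\|W\|_\infty)\|v\|^2\leq |\lambda|+\|W\|_\infty$ because $|u|\leq 1$. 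For the second sum I would use $d_b(v^2)(x,y)=(v(x)+v(y))\,d_b v(x,y)$, split it into a $v(x)$-piece and a $v(y)$-piece, and bound each by Cauchy--Schwarz by $\sqrt{Q(v)}\,z\leq z$; here the $v(y)$-piece becomes another copy of $z$ after exploiting the symmetry of $b$ (which makes $d_b u(x,y)^2$ symmetric in $x,y$). Combining the two estimates gives the quadratic inequality $z^2\leq(|\lambda|+\|W\|_\infty)+2z$, whose positive root yields precisely the claimed bound $z\leq 1+\sqrt{1+|\lambda|+\|W\|_\infty}$.

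The density step is routine: choose $v_n\in C_0(X)$ with $\|v-v_n\|_{Q_{b,W}}\to 0$ and $\|v_n\|_{Q_{b,W}}\leq 1$, pass to a pointwise a.e.\ convergent subsequence, and apply Fatou to the nonnegative summand $v_n(x)^2\, d_b u(x,y)^2$. The only real difference from the continuous proof is the absence of a pointwise chain rule for differences, which is exactly what forces the factorization $v^2(x)-v^2(y)=(v(x)+v(y))(v(x)-v(y))$ and the two-piece Cauchy--Schwarz split; this is a notational nuisance rather than a substantive obstacle, so I do not expect any genuinely new difficulty.
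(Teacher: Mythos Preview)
Your proposal is correct and follows essentially the same approach as the paper's proof. The paper packages your factorization-and-symmetry step as the preliminary estimate $\sum_{x,y}|d_b v^2(x,y)|\,|d_b u(x,y)|\leq 2\sum_{x,y}|v(x)|\,|d_b v(x,y)|\,|d_b u(x,y)|$ before invoking the integrated Leibniz rule and Cauchy--Schwarz, but the substance---the quadratic inequality $z^2\leq |\lambda|+\|W\|_\infty+2z$ and the density/Fatou extension---is identical.
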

\begin{proof}
Let $v\in C_0(X)$ be such that $\|v\|_{Q_{b,W}}\leq 1$. A short computation and the symmetry $b(x,y)=b(y,x)$ leads to
\begin{equation}\label{eq11}
    \sum_{x,y\in X} |d_b v^2(x,y)|  |d_b u(x,y)|
	\leq \; 2 \sum_{x,y\in X} |v(x)| |d_b v(x,y)| |d_b u(x,y)|,
\end{equation}
where each of the sums is finite as $v$ has finite support. Set
$$z:=\sqrt{\sum_{x,y\in X} v(x)^2\, d_bu(x,y)^2}.$$
Like in Proposition~\ref{Prop-Caccio}, $\|v\|_{Q_{b,W}}\leq 1$ leads to $\|v\|^2\leq 1$ and $Q(v,v)\leq 1$ since $\langle W \, v,v \rangle + \|W\|_\infty \|v\|^2\geq 0$. Using that $u$ is a generalized eigenfunction and the Leibniz rule, the estimate
$$
z^2
	= \left|
		Q\big(u,v^2u\big)
		-
		\sum_{x,y\in X} u(x)\, d_b v^2(x,y)\, d_b u(x,y)
	\right|
	\leq |\lambda| + \|W\|_\infty + \sum_{x,y\in X} |d_b v^2(x,y)|\, |d_b u(x,y)|
$$
follows by using $|u|\leq 1$ and $\|v\|_{Q_{b,W}}\leq 1$. Hence, $z^2\leq |\lambda| + \|W\|_\infty + 2\, z$ is concluded using \eqref{eq11} and the Cauchy-Schwarz inequality. Following the lines of the proof of Proposition~\ref{Prop-Caccio} the desired result is derived.
\end{proof}
\begin{proposition}
\label{Prop-DiscrShnol}
Suppose $Q:=Q_{b,0}$ is a regular Dirichlet form on $\ell^2(X,m)$ with zero potential $c\equiv 0$ such that the associated self-adjoint operator $L$ is critical with ground state $1$. Let $W\in B(X,\RM)$ and $u\in F(X)$ be a generalized eigenfunction of $L+W$ with eigenvalue $\lambda\in\RM$. If $|u|\leq 1$, then $\lambda\in\sigma(L+W)$.
\end{proposition}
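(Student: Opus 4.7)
The plan is to follow the argument of Proposition~\ref{Prop-Shnol} essentially line by line, replacing integrations against $\mu$ by sums over edges of the graph and the chain rule by the integrated Leibniz rule displayed in the excerpt, so as to verify the Weyl-type criterion of Proposition~\ref{Prop-CritWeyl} for the form $Q_{b,W}=Q+Q_W$.

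First I would invoke the discrete analogue of Theorem~\ref{t:char_criticality} (available for regular Dirichlet forms on graphs by \cite{KeLe12,KePiPo16}): since $L$ is critical in $X$ with ground state $1$, there is a null-sequence $(\varphi_n)\subset C_0(X)$ of nonnegative functions with $0\leq\varphi_n\leq 1$, $\varphi_n\to 1$ pointwise on $X$, and $Q(\varphi_n)\to 0$. If $u\in\ell^2(X,m)$ then $u$ is itself an $\ell^2$-eigenfunction of $L+W$ and $\lambda\in\sigma(L+W)$ is immediate; otherwise Fatou's lemma applied to $\varphi_n|u|\nearrow|u|$ forces $\|\varphi_n u\|\to\infty$, and for $n$ sufficiently large I set $w_n:=\varphi_n u/\|\varphi_n u\|$, so $\|w_n\|=1$ and $\|\varphi_n u\|\geq 1$.

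The heart of the argument is then the following. For $v\in\Dd(Q_{b,W})$ with $\|v\|_{Q_{b,W}}\leq 1$, I use that $u$ is a generalized eigenfunction and that $\varphi_n v\in C_0(X)$ to rewrite
\[
Q_{b,W}(w_n,v)-\lambda\langle w_n,v\rangle=\frac{1}{\|\varphi_n u\|}\bigl[Q(\varphi_n u,v)-Q(u,\varphi_n v)\bigr].
\]
Applying the Leibniz rule from the excerpt to both $Q(\varphi_n u,v)$ and $Q(u,\varphi_n v)=Q(\varphi_n v,u)$, the common term $\sum_{x,y}\varphi_n(x)\,d_bu(x,y)\,d_bv(x,y)$ cancels, leaving only the two mixed terms $\sum_{x,y}u(x)\,d_b\varphi_n(x,y)\,d_bv(x,y)$ and $-\sum_{x,y}v(x)\,d_b\varphi_n(x,y)\,d_bu(x,y)$. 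Cauchy-Schwarz on $X\times X$ bounds the first by $Q(\varphi_n)^{1/2}Q(v)^{1/2}\leq Q(\varphi_n)^{1/2}$ (using $|u|\leq 1$ and $Q(v)\leq\|v\|_{Q_{b,W}}^2\leq 1$), and the second by $Q(\varphi_n)^{1/2}\bigl(\sum_{x,y}v(x)^2\,d_bu(x,y)^2\bigr)^{1/2}$; the latter square-root is controlled by Proposition~\ref{Prop-DiscrCaccio}, so the whole difference is at most $(2+\sqrt{1+|\lambda|+\|W\|_\infty})\,Q(\varphi_n)^{1/2}$. Combined with $\|\varphi_n u\|\geq 1$ and $Q(\varphi_n)\to 0$, this drives the supremum over $v$ to zero, and Proposition~\ref{Prop-CritWeyl} yields $\lambda\in\sigma(L+W)$.

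The main obstacle is not analytic but combinatorial: the discrete gradient $d_b$ has no chain rule, so the cancellation between $Q(\varphi_n u,v)$ and $Q(u,\varphi_n v)$ and the subsequent Cauchy-Schwarz have to be organised carefully through the symmetrised Leibniz identity stated in the excerpt, and the resulting cross term $\sum_{x,y}v(x)^2\,d_bu(x,y)^2$ can only be tamed by a Caccioppoli-type bound. This is precisely the reason Proposition~\ref{Prop-DiscrCaccio} is developed as a stand-alone tool beforehand; once it is in hand, the argument is otherwise a verbatim translation of the continuous proof of Proposition~\ref{Prop-Shnol}.
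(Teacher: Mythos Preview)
Your proposal is correct and follows essentially the same route as the paper's proof: null-sequence from the criticality characterization, reduction to $u\notin\ell^2$, the identity $Q_{b,W}(w_n,v)-\lambda\langle w_n,v\rangle=\|\varphi_n u\|^{-1}[Q(\varphi_n u,v)-Q(u,\varphi_n v)]$, cancellation via the integrated Leibniz rule leaving the two mixed edge-sums, Cauchy--Schwarz plus Proposition~\ref{Prop-DiscrCaccio}, and conclusion by Proposition~\ref{Prop-CritWeyl}. The only minor imprecision is writing $\varphi_n|u|\nearrow|u|$; the null-sequence need not be monotone, but Fatou only requires pointwise convergence of nonnegative functions, so your conclusion $\|\varphi_n u\|\to\infty$ is still valid.
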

\begin{proof}
According to \cite[Theorem~2.20]{KePiPo16}, there exists a null-sequence $(\varphi_n)$ of $Q$ satisfying $0\leq \varphi_n\leq 1$ such that $\varphi_n(x)\to 1$ holds for each $x\in X$. We may suppose that $u\not\in L^2(X,m)$ since otherwise $\lambda\in\sigma(L+W)$ follows immediately. Define the sequence $w_n:=\frac{\varphi_n u}{\|\varphi_n u\|}\,,\; n\in\NM$.
The form associated with $L+W$ is denoted by $Q_{b,W}=Q + Q_W$. Using the integrated version of the Leibniz rule and that $u$ is a generalized eigenfunction of $L+W$, we have
\begin{multline*}
|Q_{b,W}(w_n,v)-\lambda \langle w_n, v\rangle|\\
=\frac{1}{\|\varphi_n u\|}
\left|
	\sum_{x,y\in X} u(x) \, 1 \, d_b\varphi_n(x,y) \, d_b v(x,y)
	-
	\sum_{x,y\in X} v(x) \, 1 \, d_b u(x,y) \, d_b\varphi_n(x,y)
\right|
\end{multline*}
Following the lines of the proof of Proposition~\ref{Prop-Shnol} and using Cauchy-Schwarz, we get
$$
\Big| Q_{b,W}(w_n,v)-\lambda  \langle w_n,v\rangle \Big| \;
	\leq \; \frac{Q(\varphi_n,\varphi_n)^{\frac{1}{2}}}{\|\varphi_n u\|}
			\left(
				1 + \left(\sum_{x,y\in X} v(x)^2 \, d_bu(x,y)^2\right)^{\frac{1}{2}}
			\right)
$$
for each $v\in\Dd(Q_{b,W})$ satisfying $\|v\|_{Q_{b,W}}\leq 1$. Due to Proposition~\ref{Prop-DiscrCaccio}, $\sum_{x,y\in X} v(x)^2 \, d_bu(x,y)^2$ is bounded by a constant (only depending on $\|W\|_\infty$ and $\lambda$). Furthermore, there exists an $n_0\in\NM$ such that $\|\varphi_n u\|\geq 1$ for $n\geq n_0$. With this at hand, the previous considerations lead to
$$
\lim_{n\to\infty} \;
	\sup_{v\in\Dd(Q_{b,W}), \|v\|_{Q_{b,W}}\leq 1}
		\Big|
			Q_{b,W}(w_n,v)-\lambda  \langle w_n,v\rangle
		\Big|
			= 0
$$
as $(\varphi_n)$ is a null-sequence of $Q$. Thus, Proposition~\ref{Prop-CritWeyl} yields $\lambda\in\sigma(L+W)$.
\end{proof}

\begin{theorem}
\label{Theo-DiscrShnol}
Let $b$ be a connected graph, $c:X\to\RM$ and $Q=Q_{b,c}$ be a regular Dirichlet form on $X$ with associated self-adjoint operator $H$. Suppose that there is a $W\in C_b(X)$ such that $H+W$ is critical in $X$ with ground state $\varphi$. If $u\in F(X)$ is a generalized eigenfunction of $H$ with eigenvalue $\lambda\in\RM$ and $|u|\leq \varphi$ in $X$, then $\lambda\in\sigma(H)$.
\end{theorem}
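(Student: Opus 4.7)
The plan is to reduce to Proposition~\ref{Prop-DiscrShnol} via a ground state transform, exactly mirroring the proof of Theorem~\ref{Theo-Shnol} in the continuous case. Since $H+W$ is critical in $X$, the discrete analog of Theorem~\ref{t:char_criticality} (which remains valid in this setting, as indicated at the beginning of Section~\ref{Chap-Dir}) produces the positive ground state $\varphi$ satisfying $(H+W)\varphi = 0$, unique up to a multiplicative constant.

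First, I would apply the ground state transform $T_\varphi f := f/\varphi$. Following \cite{KePiPo16}, this extends to an isometry of $\ell^2(X,m)$ onto $\ell^2(X,\varphi^2 m)$ and identifies the form $Q_{b,c+W}$ with a regular Dirichlet form of the type $Q_{b',0}$ on $\ell^2(X,\varphi^2 m)$, where $b'(x,y) := \varphi(x)\varphi(y)\,b(x,y)$; the potential is absorbed into the graph weights precisely because $\varphi$ solves $(H+W)\varphi=0$. Denote by $L$ the self-adjoint operator associated with $Q_{b',0}$. Then $L$ is critical with constant ground state $1$, and since multiplication operators commute with $T_\varphi$,
\begin{equation*}
T_\varphi \circ H \circ T_\varphi^{-1} \;=\; T_\varphi \circ (H+W) \circ T_\varphi^{-1} \;-\; W \;=\; L - W.
\end{equation*}
In particular, $H$ and $L - W$ are unitarily equivalent, so $\sigma(H) = \sigma(L-W)$.

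Second, set $u_\varphi := T_\varphi u = u/\varphi \in F(X)$. The hypothesis $|u|\leq \varphi$ gives $|u_\varphi|\leq 1$, and applying $T_\varphi$ to $Hu = \lambda u$ shows that $u_\varphi$ is a generalized eigenfunction of $L-W$ with eigenvalue $\lambda$. Proposition~\ref{Prop-DiscrShnol}, applied to the critical form $Q_{b',0}$ with bounded potential $-W \in B(X,\RM)$ and bounded generalized eigenfunction $u_\varphi$, then yields $\lambda \in \sigma(L-W) = \sigma(H)$. The main obstacle is not in the Shnol-type argument itself (which is already packaged in Proposition~\ref{Prop-DiscrShnol}) but in verifying that the ground state transform in the discrete setting really converts $Q_{b,c+W}$ into a \emph{potential-free} regular Dirichlet form $Q_{b',0}$ and preserves criticality with the appropriate ground state $1$; these structural facts are the essential inputs quoted from \cite{KePiPo16}, together with the (automatic) inclusion $C_b(X)\subseteq B(X,\RM)$ required by Proposition~\ref{Prop-DiscrShnol}.
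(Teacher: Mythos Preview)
Your proposal is correct and follows essentially the same route as the paper: apply the ground state transform $T_\varphi$ (citing \cite{KePiPo16}) to reduce $H+W$ to a potential-free critical operator $L$ with ground state $1$, observe that $H$ is unitarily equivalent to $L-W$, and then invoke Proposition~\ref{Prop-DiscrShnol} for the transformed eigenfunction $u_\varphi=u/\varphi$. You have in fact spelled out more of the structural details (the explicit transformed edge weight $b'(x,y)=\varphi(x)\varphi(y)b(x,y)$ and the isometry $\ell^2(X,m)\to\ell^2(X,\varphi^2 m)$) than the paper does; the only minor imprecision is that $u_\varphi$ lies in the space $F(X)$ taken with respect to the new weight $b'$ rather than the original $b$, but this follows immediately from $u\in F(X)$ and the formula for $b'$.
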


\begin{proof}
The ground state transform $T_\varphi$ eliminates the zero-order term $c$ and $T_\varphi\circ (H+W)\circ T_\varphi^{-1}$ is critical with ground state $1$, see \cite[Proposition~2.7]{KePiPo16}. With this at hand the proof follows the same lines as the proof of Theorem~\ref{Theo-Shnol} by using Proposition~\ref{Prop-DiscrShnol} and $\sigma(H)=\sigma(H_\varphi)$.
\end{proof}
 \begin{center}{\bf Acknowledgments} \end{center}
The authors wish to thank Daniel Lenz and Marcel Schmidt for pointing out the papers \cite{BoLeSt09,LeStVe09,LeStVe11}. They acknowledge the support of the Israel Science Foundation (grants No. 970/15) founded by the Israel Academy of Sciences and Humanities.


\providecommand{\bysame}{\leavevmode\hbox to3em{\hrulefill}\thinspace}
\providecommand{\MR}{\relax\ifhmode\unskip\space\fi MR }
\providecommand{\MRhref}[2]{%
  \href{http://www.ams.org/mathscinet-getitem?mr=#1}{#2}
}
\providecommand{\href}[2]{#2}

\end{document}